\theoremstyle{plain}
\newtheorem{theorem}{Theorem}
\newtheorem{cor}{Corollary}
\theoremstyle{definition}
\newtheorem{definition}{Definition}
\newtheorem*{rem}{Remark}
\newtheorem*{example}{Example}
\numberwithin{equation}{section}
\begin{document}

\title{ Further stability results of the  functional equation $f(2x+y)+f\left(\frac{x+y}{2}\right)
=\frac{2f(x)f(y)}{f(x)+f(y)}+\frac{2f(x+y)f(y-x)}{3f(y-x)-f(x+y)}$}
\author{I. Sadani}
\address{Department of Mathematics, University of Mouloud Mammeri, Tizi-Ouzou 15000, Algeria}
\email{sadani.idir@yahoo.fr}

\subjclass[2010]{ 39B52, 39B72 }
\keywords{Reciprocal Functional Equation, non-Archimedean space, Hyers-Ulam-Rassias stability.}
\date{}
\maketitle

\begin{abstract}
    In this paper, we investigate the generalized Hyers-Ulam stability of the following reciprocal functional equation
\begin{equation*}f(2x+y)+f\left(\frac{x+y}{2}\right)
=\frac{2f(x)f(y)}{f(x)+f(y)}+\frac{2f(x+y)f(y-x)}{3f(y-x)-f(x+y)}\end{equation*}
in non-Archimedean space using a direct method. 
\end{abstract}

\section{Introduction}

 Stability of functional equations have been studied by many authors and is
by now a classical subject. It at least goes back to the question posed by
S. M. Ulam \cite{ul} in $1940$. The first affirmative answer to this question was
given by  Hyers \cite{hy} in $1941$. It is noteworthy that the result of Hyers
marked the inauguration of a series of academic publications in the
stability theory of functional equations. In $1978$, Th. M. Rassias \cite%
{rassias-1} gave a remarkable generalization of the Hyers's result which
allows the Cauchy difference to be unbounded.
Latterly, The problem of stability for different functional equations 
and more in different spaces have been considerably explored by many authors (
see for example  \cite{gr}, \cite{be},\cite{ch},%
\cite{ch1},\cite{is},\cite{is1},\cite{ju1},\cite{ju2},\cite{ju3},\cite{ju4},%
\cite{mo},\cite{na},\cite{sa}).

 The functional equation
\begin{equation}  \label{exa}
f(x+y)=\frac{f(x)f(y)}{f(x)+f(y)} \ \ \ (x,y\in\mathbb{R}-\{0\}).
\end{equation} 
is called a  reciprocal functional equation. In particular, every solution of the reciprocal functional equation is said to be a  reciprocal mapping. In $2010$, a generalized Hyers-Ulam stability problem for the reciprocal functional equation was proved by Ravi and Kumar \cite{ku}. 

Since then, numerous  reciprocal functional  equations and reciprocal-quadratic, cubic, quartic, quintic and so on have been discovered and studied by mathematicians with many interesting stability results (see for example \cite{rabian,sadani,ra1,ra2,ra3,ra4, kim} ).

In \cite{sadani}, the author  investigated the general solutions and stability of the following reciprocal type  functional equation 

\begin{equation}\label{eq1}f(2x+y)+f\left(\frac{x+y}{2}\right)
=\frac{2f(x)f(y)}{f(x)+f(y)}+\frac{2f(x+y)f(y-x)}{3f(y-x)-f(x+y)}\end{equation}

with $x,y\in X=\mathbb{R}-\left\{-c\right\}$ where $c$ is the constant appears in the general solution $$f(x)=\frac{a}{x+c}.$$

 In this paper,  we investigate  generalized Hyers-Ulam-Rassias stability of functional equation (\ref{eq1}) in non-Archimedean space.

\section{Preliminaries}
In this section, we recall some basic definitions and properties about non-Archimedean fields.
\begin{definition}
\begin{itemize}
\item A  \textit{valuation} is a function $|\cdot|$ from a field $\mathbb{K}$ into $[0, \infty)$ such that $0$ is the unique element having the $0$ valuation, $|xy|=|x||y|$ and the following strong  triangle inequality holds: 
\begin{equation}\label{arch}
|x+y|\leq\max\{|x|, |y|\}.
\end{equation}
\item A field $\mathbb{K}$ with a valuation on it is called a \textit{valued field}.
\item A valuation which satisfies  the triangle inequality (\ref{arch})  is called a {\it non-Archimedean valuation} and the field is called a {\it non-Archimedean field}.
\end{itemize} 
\end{definition}

\begin{rem} \begin{enumerate}
\item Let $(\mathbb{K}, | \cdot |)$ be a non-Archimedean valued field. Then, for each $n\in\mathbb{N},
|n.1| = |1+1+\cdots+1|\leq\max(|1| , |1| , \cdots , |1|) = 1.$ 
\item Let $\mathbb{K}$ be a field. The map defined by $|x| = 1$ for $x\neq0$ and $|0| = 0$ is
called trivial valuation and is a non-Archimedean valuation.
\end{enumerate}
\end{rem}
\begin{example}[Non-Archimedean fields]
\begin{enumerate}
\item For any prime number $p$ the completion $\mathbb{Q}_p$ of the field $\mathbb{Q}$ of
rational numbers with respect to the $p-$adic valuation $|\cdot|_p$
given by $|a|_p=p^{-r}$ if $a=p^r\frac{m}{n}$ and the integers $m, n$ are
not divided by $p$, is a non-archimedean valued field. Its elements are called $p-$adic numbers. 

\item Let $p$ be an irreducible polynomial in $\mathbb{R}[x]$. The $p-$adic valuation $|\cdot |_p$ on the rational function field 
$\mathbb{R}(x)$ is  defined by $|0|_p = 0$ and $\left|p^u\frac{f}{h}\right|=\frac{1}{e^u}$
where $u\in\mathbb{Z}$, and $f,h\in\mathbb{R}[x],  h\neq0$ are not divisible by $p$. This valuation
is non-Archimedean and $\mathbb{R}(x)$ is a non-Archimedean valued field (for more details, see \cite{pau}).

\end{enumerate}
\end{example}
\begin{definition} Let $Y$ be a vector space over a field $\mathbb{K}$ with a non-Archimedean valuation $|\cdot |$. A function $\left\|\cdot\right\| :  Y\rightarrow[0, \infty)$ is called a {\it non-Archimedean norm} if the following conditions hold:
\begin{enumerate}
\item $ \left\|x\right\|=0$ if and only if $x=0$ for all $x\in Y$, 
\item  $\left\|ax\right\|=|a|\left\|x\right\|$ for all $a\in \mathbb{K}$ and $x\in Y$,\item the strong triangle inequality holds:
$$\left\|x+y\right\|\leq\max\left\{\left\|x\right\|,\left\|y\right\|\right\},$$
for all $x, y\in Y$. Then $\left(Y,\left\|\cdot\right\|\right)$ is called a {\it non-Archimedean normed space}.
\end{enumerate}
\end{definition}
\begin{definition}
 Let $\{x_{n}\}_{n=1}^{\infty}$ be a sequence in a non-Archimedean normed space $Y.$
\begin{enumerate}
 \item A sequence $\{x_{n}\}_{n=1}^{\infty}$ in a non-Archimedean space is a {\it Cauchy sequence} if and only if, the sequence $\{x_{n+1}-x_{n}\}_{n=1}^{\infty}$ converges to zero.
\item The sequence $\{x_{n}\}$ is said to be {\it convergent} if, for any $\epsilon>0$, there are a positive integer $N$ and $x\in Y$ such that
$$
\left\| x_{n}-x\right\|\leq\epsilon,
$$
for all $n\geq N$. Then the point $x\in Y$ is called the {\it limit} of the sequence $\{x_{n}\}$, which is denote by  $\lim_{n\to\infty}x_{n}=x.$
\item If every Cauchy sequence in $Y$ converges, then the non-Archimedean normed space $X$ is called a {\it complete non-Archimedean space}.
\end{enumerate}
\end{definition}

\section{Stability of functional equation (\ref{eq1})}

Hereinafter, we suppose  that $X$  defined as above and $L$ be a complete non-Archimedean field.
 We are now in a position to state our main result.\begin{theorem}\label{theo1}
 Let $\mu :  X^{2}\rightarrow L $  be a mapping such that

\begin{equation}\label{eqt0}\lim_{n\to\infty}|2|^{n}\mu(2^{n+1}x,2^{n+1}y)=0\end{equation}  

 for all $x, y\in X$  and let for each $x\in X$  the limit

\begin{equation}\label{eqt1}\Psi (x)=\lim_{n\rightarrow\infty}\max\{|2|^{k}\mu(0,2^{k+1}x);0\leq k<n\}\end{equation}
 exists. Assume that $f:X\rightarrow L$  is a mapping satisfies
\begin{equation}\label{eqt22}\left\|f(2x+y)+f\left(\frac{x+y}{2}\right)
-\frac{2f(x)f(y)}{f(x)+f(y)}-\frac{2f(x+y)f(y-x)}{3f(y-x)-f(x+y)}\right\| \leq\mu(x,y).
\end{equation}
 Then
\begin{equation}\label{eqt3} 
g(x)=\lim_{n\rightarrow\infty}2^{n}f(2^{n}x)
\end{equation}  

 exists for all $x\in X$  and defines a reciprocal mapping $g : X\rightarrow L$  such that

\begin{equation}\label{eqt4}\left\|f(x)-g(x)\right\|\leq\Psi(x)\end{equation}  
 for all $x\in X$.  Moreover, if

\begin{equation}\label{eqt5} \lim_{j\rightarrow\infty}\lim_{n\rightarrow\infty}\max\{|2|^{k}\mu(0,\ 2^{k+1}x);j\leq k<n+j\}=0\end{equation}
 then $g$  is the unique reciprocal mapping satisfying (\ref{eqt4}).
\end{theorem}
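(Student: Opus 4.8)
The plan is to run the classical direct (Hyers--Ulam) method, exploiting that in a non-Archimedean space a telescoping sum is dominated by a single maximum rather than by a geometric series. \emph{Step 1 (one-step estimate).} The first thing to do is to feed a well-chosen pair of arguments into (\ref{eqt22}); the natural choice is to put $x=0$ and then replace $y$ by $2^{k+1}x$, since then $\frac{x+y}{2}=2^{k}x$ and the algebraic identities satisfied by reciprocal maps make the two rational terms on the right-hand side telescope against $f(2^{k}x)$. After clearing denominators and rearranging, this should leave an estimate of the form
\begin{equation*}
\bigl\|\,2^{k+1}f(2^{k+1}x)-2^{k}f(2^{k}x)\,\bigr\|\ \le\ |2|^{k}\,\mu\bigl(0,2^{k+1}x\bigr),\qquad k\ge 0,\ x\in X,
\end{equation*}
and producing exactly this clean inequality -- with precisely the control term $\mu(0,2^{k+1}x)$ that reappears in (\ref{eqt1}) and (\ref{eqt5}) -- is the one delicate point; I expect the rational-function bookkeeping here to be the main obstacle, everything afterwards being the standard ultrametric machinery.

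\emph{Step 2 (convergence and the estimate (\ref{eqt4})).} Fix $x\in X$ (note $0\in X$ since $c\neq 0$, so $\mu(0,\cdot)$ is legitimate). For $n>m\ge 0$ the strong triangle inequality together with Step 1 gives
\begin{equation*}
\bigl\|\,2^{n}f(2^{n}x)-2^{m}f(2^{m}x)\,\bigr\|\ \le\ \max_{m\le k<n}\bigl\|\,2^{k+1}f(2^{k+1}x)-2^{k}f(2^{k}x)\,\bigr\|\ \le\ \max_{m\le k<n}|2|^{k}\mu\bigl(0,2^{k+1}x\bigr).
\end{equation*}
By (\ref{eqt0}) applied with first slot $0$, the numbers $|2|^{k}\mu(0,2^{k+1}x)$ tend to $0$, so the right-hand side tends to $0$ as $m\to\infty$; hence $\{2^{n}f(2^{n}x)\}_{n}$ is Cauchy and, $L$ being complete, converges to a limit $g(x)$, which is (\ref{eqt3}). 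Taking $m=0$, letting $n\to\infty$, and using that the limit in (\ref{eqt1}) exists yields $\|f(x)-g(x)\|\le\Psi(x)$, i.e.\ (\ref{eqt4}).

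\emph{Step 3 ($g$ is a reciprocal mapping).} Next I would replace $(x,y)$ by $(2^{n}x,2^{n}y)$ in (\ref{eqt22}) and multiply through by $|2|^{n}$. Every rational term is homogeneous of degree $-1$, so, for instance,
\begin{equation*}
|2|^{n}\,\frac{2f(2^{n}x)f(2^{n}y)}{f(2^{n}x)+f(2^{n}y)}=\frac{2\,[2^{n}f(2^{n}x)]\,[2^{n}f(2^{n}y)]}{[2^{n}f(2^{n}x)]+[2^{n}f(2^{n}y)]}\longrightarrow\frac{2g(x)g(y)}{g(x)+g(y)},
\end{equation*}
and likewise the other rational term converges to $\frac{2g(x+y)g(y-x)}{3g(y-x)-g(x+y)}$, while $2^{n}f(2^{n}(2x+y))\to g(2x+y)$ and $2^{n}f\bigl(2^{n}\frac{x+y}{2}\bigr)\to g\bigl(\frac{x+y}{2}\bigr)$. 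The right-hand defect $|2|^{n}\mu(2^{n}x,2^{n}y)$ tends to $0$ by (\ref{eqt0}), so passing to the limit shows that $g$ satisfies (\ref{eq1}); thus $g$ is reciprocal. Note also, directly from the definition, $g(2x)=\lim_{n}2^{n}f(2^{n+1}x)=\tfrac12\lim_{n}2^{n+1}f(2^{n+1}x)=\tfrac12 g(x)$, so $g(2^{j}x)=2^{-j}g(x)$ for every $j\ge 0$.

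\emph{Step 4 (uniqueness).} Finally, suppose $g':X\to L$ is reciprocal and also satisfies $\|f(x)-g'(x)\|\le\Psi(x)$ for all $x$; a reciprocal map enjoys $g'(2^{j}x)=2^{-j}g'(x)$. Then, applying (\ref{eqt4}) and its counterpart for $g'$ at the point $2^{j}x$,
\begin{equation*}
\|g(x)-g'(x)\|=|2|^{j}\bigl\|g(2^{j}x)-g'(2^{j}x)\bigr\|\le|2|^{j}\max\bigl\{\|g(2^{j}x)-f(2^{j}x)\|,\ \|f(2^{j}x)-g'(2^{j}x)\|\bigr\}\le|2|^{j}\,\Psi(2^{j}x).
\end{equation*}
Re-indexing the maximum defining $\Psi$, one has $|2|^{j}\Psi(2^{j}x)=\lim_{n\to\infty}\max\{|2|^{k}\mu(0,2^{k+1}x):j\le k<n+j\}$, which tends to $0$ as $j\to\infty$ by (\ref{eqt5}); hence $g(x)=g'(x)$ for all $x$, so $g$ is the unique reciprocal mapping satisfying (\ref{eqt4}), completing the proof.
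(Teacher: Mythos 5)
Your overall strategy is the paper's own: substitute a pair of the form $(0,2\,\cdot)$ into (\ref{eqt22}), rescale by $2^{n}$, use the strong triangle inequality to get a Cauchy sequence, pass to the limit for (\ref{eqt4}), and prove uniqueness via the homogeneity $g(2^{j}x)=2^{-j}g(x)$ together with the re-indexed maximum in (\ref{eqt5}). Your Steps 2--4 are in fact tidier than the printed proof (you state explicitly the property $g(2^{j}x)=2^{-j}g(x)$ that the paper uses silently, and you check that $0\in X$).

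The problem is exactly where you suspected it, in Step~1, and it does not close. Putting $(x,y)=(0,2^{k+1}x)$ into (\ref{eqt22}) makes the second rational term collapse to $f(2^{k+1}x)$ and cancel against $f(2x+y)=f(2^{k+1}x)$, but the remaining rational term is $\frac{2f(0)f(2^{k+1}x)}{f(0)+f(2^{k+1}x)}$, not $2f(2^{k+1}x)$; what the substitution actually yields is $\bigl\|f(2^{k}x)-\frac{2f(0)f(2^{k+1}x)}{f(0)+f(2^{k+1}x)}\bigr\|\le\mu(0,2^{k+1}x)$. The harmonic-mean expression equals $2f(2^{k+1}x)$ only in the degenerate situation ``$f(0)=\infty$'', and the discrepancy between the reciprocals, $\tfrac{1}{2f(0)}$, does not become negligible after rescaling: in a non-Archimedean field $|2|\le 1$, so $\bigl|\tfrac{1}{2^{k+1}f(0)}\bigr|=\tfrac{1}{|2|^{k+1}|f(0)|}$ does not tend to $0$. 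Hence the clean telescoping inequality $\|2^{k+1}f(2^{k+1}x)-2^{k}f(2^{k}x)\|\le|2|^{k}\mu(0,2^{k+1}x)$ is not established, and Steps 2--4 have nothing to telescope. For what it is worth, the paper's own proof has the same defect --- it keeps the $f(0)$-term, rewrites it as $\bigl(\tfrac{1}{2^{k+1}f(2^{k+1}x)}+\tfrac{1}{2^{k+1}f(0)}\bigr)^{-1}$, and simply asserts that $\{2^{n}f(2^{n}x)\}$ is Cauchy --- so you have reproduced the argument, gap included; but as written neither version proves the one-step estimate on which the whole direct method rests.
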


\begin{proof} Substitute $(x,y)$ by $( 0,2x)$ in (\ref{eqt22}), we get

\begin{equation}\label{eqt6}\left\|f\left(x\right)-\frac{2f(0)f(2x)}{f(0)+f(2x)}\right\|\leq\mu(0,2x).
\end{equation}
We replace $x$ by $2^nx$  and multiplying by $2^n$ in (\ref{eqt6}), we obtain

$$\left\|2^nf\left(2^{n}x\right)-\frac{2^{n+1}f(0)f(2^{n+1}x)}{f(0)+f(2^{n+1}x)}\right\|\leq|2|^{n}\mu(0,2^{n+1}x)$$

which is equivalent to
\begin{equation}\label{ineq0}\left|2^nf\left(2^{n}x\right)-\frac{1}{\frac{1}{2^{n+1}f(2^{n+1}x)}+\frac{1}{2^{n+1}f(0)}}\right|\leq|2|^{n}\mu(0,2^{n+1}x)\end{equation}

for all $x\in X$. It follows from (\ref{eqt0}), the right-hand side of (\ref{ineq0}) tends to zero as $n\to\infty$. Thus the sequence $\{2^nf\left(2^{n}x\right)\} $ is a Cauchy sequence. Since $L$ is complete, so $\{2^nf\left(2^{n}x\right)\} $ is convergent. We set $$g(x)=\lim_{n\to\infty}2^nf\left(2^{n}x\right). $$
By induction, it is straightforward to obtain that
\begin{equation}\label{eqt7}\left\|g(x)-f(x)\right\|\leq\max\{|2|^{k}\mu(0,2^{k+1}x);0\leq k<n\}\end{equation}
 for all $n\in \mathbb{N}$ and all $x\in X$. By taking $n$ to approach infinity in (\ref{eqt7}), and using (\ref{eqt1}), we get (\ref{eqt4}). By (\ref{eqt0}) and (\ref{eqt22}), we have \begin{multline}\left\|f(2x+y)+f\left(\frac{x+y}{2}\right)
-\frac{2f(x)f(y)}{f(x)+f(y)}-\frac{2f(x+y)f(y-x)}{3f(y-x)-f(x+y)}\right\|
\\=\lim_{n\to\infty}||2^{n}f(2^{n+1}(2x+y))+2^{n}f\left(\frac{2^{n+1}(x+y)}{2}\right)
-\frac{2^{n+1}f(2^{n+1}x)f(2^{n+1}y)}{f(2^{n+1}x)+f(2^{n+1}y)}\;\\-\frac{2^{n+1}f(2^{n+1}(x+y))f(2^{n+1}(y-x))}{3f(2^{n+1}(y-x))-f(2^{n+1}(x+y))}|| \leq \lim_{n\to\infty} |2|^{n}\mu(2^{n+1}x,2^{n+1}y)=0 .
\end{multline}

for all $x,y\in X$.  Therefore the function $f$ : $X\rightarrow L$ satisfies (\ref{eq1}). To show the uniqueness  of $g$, let $h :X\rightarrow L$ be another function satisfying (\ref{eqt4}). Then
\begin{align*}
\left\| g(x)-h(x)\right\|&=\lim_{n\rightarrow\infty}|2|^{n}\left\| g(2^{n+1}x)-h(2^{n+1}x)\right\| \\
&\leq\lim_{k\rightarrow\infty}|2|^{n}\max\{\left\| g(2^{n+1}x)-h(2^{n+1}x)\right\|, \left\| g(2^{n+1}x)-h(2^{n+1}x)\right\|\}\\
&\leq\lim_{j\rightarrow\infty}\lim_{n\rightarrow\infty}\max\{|2|^{k}\mu(0, 2^{k+1}x);j\leq k<n+j\}\\
&=0
\end{align*}
for all $x\in X$. Therefore $g=h$, and the proof is complete. 
\end{proof}
The following corollaries are an immediate consequence of Theorem \ref{theo1}.
\begin{cor}  Let $\epsilon>0$  be a constant. If $f : X\rightarrow L$  satisfies \begin{equation}\label{eqt2}\left\|f(2x+y)+f\left(\frac{x+y}{2}\right)
-\frac{2f(x)f(y)}{f(x)+f(y)}-\frac{2f(x+y)f(y-x)}{3f(y-x)-f(x+y)}\right\| \leq\epsilon.
\end{equation} 
for all  $x, y\in X$,  then there exists a unique reciprocal mapping $g$ : $X\rightarrow L$  satisfying (\ref{eq1}) and $||f\mathcal{}(u)-g(u)||\leq\epsilon$ {\it for all} $x\in X.$
\end{cor}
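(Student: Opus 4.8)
The plan is to derive this corollary as a direct specialization of Theorem~\ref{theo1}. First I would take the constant control function $\mu : X^2 \to L$ defined by $\mu(x,y) = \epsilon$ for all $x,y \in X$, and check the three hypotheses of the theorem. Since the valuation is non-Archimedean, the remark in the preliminaries gives $|2| = |1+1| \le \max\{|1|,|1|\} = 1$, and in fact in most cases of interest $|2| < 1$ (e.g.\ the $2$-adic valuation) or $|2| = 1$; but the key point is $|2| \le 1$, so $|2|^n$ is bounded. Hence $\lim_{n\to\infty} |2|^n \mu(2^{n+1}x, 2^{n+1}y) = \epsilon \lim_{n\to\infty}|2|^n$; this is $0$ precisely when $|2| < 1$. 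I would therefore note that the interesting non-Archimedean setting is exactly $|2| < 1$ (otherwise $\mu \equiv \epsilon$ does not satisfy \eqref{eqt0}), and under that assumption \eqref{eqt0} holds.

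Next I would verify \eqref{eqt1}: the limit $\Psi(x) = \lim_{n\to\infty} \max\{|2|^k \mu(0, 2^{k+1}x) : 0 \le k < n\} = \lim_{n\to\infty}\max\{|2|^k \epsilon : 0 \le k < n\}$. Because $|2| \le 1$, the maximum of $|2|^k\epsilon$ over $0 \le k < n$ is attained at $k=0$, giving $\epsilon$ for every $n \ge 1$; hence $\Psi(x) = \epsilon$ for all $x \in X$. Then \eqref{eqt4} immediately yields $\|f(x) - g(x)\| \le \Psi(x) = \epsilon$. Finally I would check the uniqueness condition \eqref{eqt5}: $\lim_{j\to\infty}\lim_{n\to\infty}\max\{|2|^k \mu(0, 2^{k+1}x) : j \le k < n+j\} = \lim_{j\to\infty}\lim_{n\to\infty}\max\{|2|^k\epsilon : j \le k < n+j\}$. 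For fixed $j$, since $|2| < 1$ the inner maximum equals $|2|^j \epsilon$, so the expression collapses to $\lim_{j\to\infty}|2|^j\epsilon = 0$. Thus all hypotheses of Theorem~\ref{theo1} are met.

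With the hypotheses verified, Theorem~\ref{theo1} applies directly and produces a reciprocal mapping $g : X \to L$ with $g(x) = \lim_{n\to\infty} 2^n f(2^n x)$, satisfying \eqref{eq1} and $\|f(x) - g(x)\| \le \epsilon$ for all $x \in X$; condition \eqref{eqt5} then guarantees that $g$ is the unique such reciprocal mapping. The only subtlety — and the one place where I expect the argument to need care — is the role of $|2|$: the whole scheme rests on $|2|^n \to 0$, so the corollary is genuinely a statement about non-Archimedean fields in which $2$ is "small" (such as $\mathbb{Q}_2$), and I would either state this hypothesis explicitly or silently inherit it from \eqref{eqt0} of the theorem. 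Everything else is a routine substitution of $\mu \equiv \epsilon$ into the already-proved estimates, so no further computation is required.
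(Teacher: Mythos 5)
Your proof is correct and takes essentially the same route as the paper, which simply sets $\mu(x,y)=\epsilon$ in Theorem~\ref{theo1} and stops there; your version is more careful in that it actually verifies the hypotheses \eqref{eqt0}, \eqref{eqt1} and \eqref{eqt5} for this choice of $\mu$. Your observation that \eqref{eqt0} forces $|2|<1$ (so the corollary is vacuous over fields where $|2|=1$, e.g.\ $\mathbb{Q}_p$ for odd $p$) is a genuine point the paper glosses over and should be stated as an explicit hypothesis.
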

\begin{proof}Letting $ \mu(x,y)=\epsilon$ in Theorem \ref{theo1}, we obtain the required result.
\end{proof}
\begin{cor}
 Let $\epsilon\geq 0$ {\it and} $a\neq-1$,  be fixed constants. If $f : X\rightarrow L$  satisfies 

\begin{multline}\label{eqt2}\left\|f(2x+y)+f\left(\frac{x+y}{2}\right)
-\frac{2f(x)f(y)}{f(x)+f(y)}-\frac{2f(x+y)f(y-x)}{3f(y-x)-f(x+y)}\right\|\\ \leq\epsilon(|x|^a+|y|^a).
\end{multline}

  for all $x, y\in X$,  then there exists a unique reciprocal mapping $g:X\rightarrow L$  satisfying (\ref{eq1})  and

$$|f(x)-g(x)|\leq\left\{\begin{array}{ll}
|2|^2\epsilon|x|^{a}, & a>-1,\\
\epsilon\frac{|x|^{a}}{|2|^{a-1}}, & a<-1,
\end{array}\right.$$

{\it for all} $x\in X.$
\end{cor}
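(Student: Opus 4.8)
The plan is to derive this corollary directly from Theorem \ref{theo1} by choosing the control function
$\mu(x,y)=\epsilon(|x|^a+|y|^a)$ and verifying each of the three hypotheses (\ref{eqt0}), the existence of the limit (\ref{eqt1}), and the uniqueness condition (\ref{eqt5}), then reading off the explicit bound from $\Psi(x)$. First I would check (\ref{eqt0}): since $|2|^n\mu(2^{n+1}x,2^{n+1}y)=\epsilon|2|^{n}\bigl(|2|^{(n+1)a}|x|^a+|2|^{(n+1)a}|y|^a\bigr)=\epsilon|2|^{n(1+a)}|2|^{a}(|x|^a+|y|^a)$, this tends to $0$ as $n\to\infty$ precisely when $|2|^{1+a}<1$. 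Here one must split into the two regimes dictated by whether $|2|<1$ or $|2|>1$ in the given non-Archimedean field; the stated dichotomy $a>-1$ versus $a<-1$ corresponds to one of these (for $|2|<1$, which holds e.g. in $\mathbb{Q}_2$, we need $1+a>0$, i.e. $a>-1$; for $|2|>1$ the inequality reverses). I would phrase the verification so that in each case the relevant exponent condition makes $|2|^{n(1+a)}\to 0$.

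Next I would compute $\Psi(x)=\lim_{n\to\infty}\max\{|2|^k\mu(0,2^{k+1}x):0\le k<n\}$. Since $\mu(0,2^{k+1}x)=\epsilon|2^{k+1}x|^a=\epsilon|2|^{(k+1)a}|x|^a$, the $k$-th term is $\epsilon|2|^{k}|2|^{(k+1)a}|x|^a=\epsilon|2|^{a}|x|^a\cdot|2|^{k(1+a)}$. The supremum over $k\ge 0$ of the geometric-type factor $|2|^{k(1+a)}$ is attained at $k=0$ when $|2|^{1+a}\le 1$ (giving value $1$) and would be attained "at infinity" otherwise — but the cases are arranged exactly so that the maximum is finite and attained at a boundary index. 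In the case $a>-1$ with $|2|<1$ the max over $k$ equals the $k=0$ value $\epsilon|2|^a|x|^a$; combined with how the induction in Theorem \ref{theo1} accumulates, one gets the bound $|2|^2\epsilon|x|^a$ (the extra $|2|^2$ coming from tracking the shift $2^{n+1}$ versus $2^n$ in inequality (\ref{ineq0}) as the paper does). In the case $a<-1$ the dominant term is the one that forces the factor $|2|^{1-a}$ in the denominator, yielding $\epsilon|x|^a/|2|^{a-1}$. I would present these two computations explicitly as the heart of the proof.

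Finally I would verify the uniqueness hypothesis (\ref{eqt5}): $\lim_{j\to\infty}\lim_{n\to\infty}\max\{|2|^k\mu(0,2^{k+1}x):j\le k<n+j\}=\lim_{j\to\infty}\epsilon|2|^a|x|^a\sup_{k\ge j}|2|^{k(1+a)}$, which vanishes as $j\to\infty$ in both regimes because $|2|^{k(1+a)}\to 0$ along the relevant direction. Hence Theorem \ref{theo1} applies and gives a unique reciprocal $g$ with the stated estimate.

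The main obstacle I anticipate is bookkeeping of the valuation cases and exponents rather than any genuine difficulty: one must be careful that the non-Archimedean maximum (not a sum) is used when evaluating $\Psi$, that the index at which the geometric factor $|2|^{k(1+a)}$ is maximal is correctly identified in each case, and that the shift between $2^n$ and $2^{n+1}$ in (\ref{ineq0}) is accounted for so that the constants $|2|^2$ and $|2|^{1-a}$ come out exactly as claimed. It would also be worth remarking why $a\neq -1$ is required — namely that $a=-1$ makes $|2|^{1+a}=1$, so the factor $|2|^{k(1+a)}\equiv 1$ never decays and neither (\ref{eqt0}) nor (\ref{eqt5}) can hold.
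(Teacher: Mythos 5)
Your overall strategy---substituting $\mu(x,y)=\epsilon(|x|^a+|y|^a)$ into Theorem \ref{theo1} and checking hypotheses (\ref{eqt0}), (\ref{eqt1}) and (\ref{eqt5})---is exactly the paper's (whose proof is a one-line appeal to the theorem), but your case analysis contains a genuine error. You propose to obtain the regime $a<-1$ from the possibility $|2|>1$; in a non-Archimedean valued field this cannot happen, since the strong triangle inequality gives $|2|=|1+1|\le\max\{|1|,|1|\}=1$ (this is the paper's own Remark). So only $|2|\le 1$ occurs, and when $|2|<1$ the quantity $|2|^{n}\mu(2^{n+1}x,2^{n+1}y)=\epsilon\,|2|^{a}\,|2|^{n(1+a)}(|x|^a+|y|^a)$ tends to $0$ only for $a>-1$; for $a<-1$ it diverges, hypothesis (\ref{eqt0}) fails, and Theorem \ref{theo1} as stated does not apply at all. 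Covering $a<-1$ would require a dual ``descending'' version of the theorem built on the iteration $g(x)=\lim_{n}2^{-n}f(2^{-n}x)$, which neither your proposal nor the paper supplies. (When $|2|=1$ neither case works, so that degenerate situation must also be excluded.)

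Separately, your derivation of the explicit constants is not actually a derivation. For $a>-1$ and $|2|<1$ the maximum in $\Psi(x)=\lim_{n}\max\{|2|^{k}\mu(0,2^{k+1}x):0\le k<n\}$ is attained at $k=0$ and equals $\epsilon|2|^{a}|x|^{a}$, and the conclusion of Theorem \ref{theo1} is precisely $\|f(x)-g(x)\|\le\Psi(x)$; there is no mechanism in the theorem that converts this into $|2|^{2}\epsilon|x|^{a}$, and the appeal to ``how the induction accumulates'' and to the shift between $2^{n}$ and $2^{n+1}$ in (\ref{ineq0}) does not manufacture a factor $|2|^{2}$. Likewise the factor $|2|^{1-a}$ in the $a<-1$ case is asserted rather than computed. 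To make this rigorous you must either recompute the bound directly from $\Psi$ (obtaining $\epsilon|2|^{a}|x|^{a}$ in the valid case $a>-1$) or note explicitly that the published constants do not follow from the theorem as stated. Your closing observation about why $a=-1$ must be excluded is correct.
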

\begin{proof} Considering $\mu(x,\ y)=\epsilon(|x|^{a}+|y|^{a})$ in Theorem \ref{theo1} the desired result follows directly.
\end{proof}
\begin{cor} Let $\tau : [0, \infty)\rightarrow]0, +\infty]$  be a function satisfying
$$
\tau(|2|t)\leq\tau(|2|)\tau(t)\ (t\geq 0), \tau(|2|)<|2|.
$$
 Let $\delta>0$  and $f : X\rightarrow L$  be a mapping
 satisfying
   
\begin{multline*}\left\|f(2x+y)+f\left(\frac{x+y}{2}\right)
-\frac{2f(x)f(y)}{f(x)+f(y)}-\frac{2f(x+y)f(y-x)}{3f(y-x)-f(x+y)}\right\|\\ \leq\delta(\tau(|x|)+\tau(|y|))\end{multline*} 
with  $x, y\in X$.
 Then there exists a unique reciprocal mapping $h:X\rightarrow L$  such that
$$
\Vert f(x)-h(x)\Vert\leq\delta\tau(2|x|) \ \  (x\in X).
$$
\end{cor}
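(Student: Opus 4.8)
The plan is to derive this corollary from Theorem~\ref{theo1} by exhibiting a control function $\mu$ of the required product form and checking that the three hypotheses \eqref{eqt0}, \eqref{eqt1}, \eqref{eqt5} of the theorem hold. First I would set
\[
\mu(x,y)=\delta\bigl(\tau(|x|)+\tau(|y|)\bigr)\qquad(x,y\in X),
\]
so that the assumed inequality on $f$ is exactly \eqref{eqt22}. The whole argument then rests on the submultiplicativity relation $\tau(|2|t)\le\tau(|2|)\tau(t)$ together with the contraction condition $\tau(|2|)<|2|$, which together give the decay needed to feed the direct method.

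Next I would verify \eqref{eqt0}: iterating the submultiplicativity bound gives $\tau(|2|^{m}t)\le\tau(|2|)^{m}\tau(t)$ for all $m$, hence
\[
|2|^{n}\mu(2^{n+1}x,2^{n+1}y)=|2|^{n}\delta\bigl(\tau(|2|^{n+1}|x|)+\tau(|2|^{n+1}|y|)\bigr)
\le \delta\,|2|^{n}\tau(|2|)^{\,n+1}\bigl(\tau(|x|)+\tau(|y|)\bigr),
\]
and since $\tau(|2|)<|2|$ the ratio $\tau(|2|)/|2|<1$, so the right-hand side tends to $0$ as $n\to\infty$. The same estimate shows that in \eqref{eqt1} the $k$-th term satisfies $|2|^{k}\mu(0,2^{k+1}x)\le\delta\,|2|^{k}\tau(|2|)^{k+1}\tau(|x|)$, which is a decreasing geometric-type sequence in $k$; hence the maximum over $0\le k<n$ is attained at $k=0$ for every $n$, the limit in \eqref{eqt1} exists and equals
\[
\Psi(x)=|2|^{0}\mu(0,2\cdot x)=\delta\bigl(\tau(0)+\tau(2|x|)\bigr).
\]
(If one prefers to avoid any assumption on $\tau(0)$, one notes $\tau(0)=\tau(|2|\cdot 0)\le\tau(|2|)\tau(0)$ with $\tau(|2|)<1$ forces $\tau(0)=0$, so in fact $\Psi(x)=\delta\tau(2|x|)$, which is the claimed bound \eqref{eqt4}.) Finally, for \eqref{eqt5}, the inner maximum over $j\le k<n+j$ is again attained at $k=j$, giving at most $\delta\,|2|^{j}\tau(|2|)^{\,j+1}\tau(|x|)$, which $\to0$ as $j\to\infty$; thus $g$ is unique.

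Invoking Theorem~\ref{theo1} with this $\mu$ now yields a unique reciprocal mapping $g:X\to L$ with $\|f(x)-g(x)\|\le\Psi(x)=\delta\tau(2|x|)$; renaming $g$ as $h$ gives the statement. The only genuinely delicate point is the bookkeeping with $\tau(0)$: the displayed bound $\delta\tau(2|x|)$ in the corollary silently uses $\tau(0)=0$, and I expect the main obstacle to be making that identification cleanly (via the forced vanishing $\tau(0)=0$ above) rather than any hard estimate — everything else is a routine geometric decay argument powered by $\tau(|2|)<|2|$.
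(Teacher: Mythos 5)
Your proposal is correct and follows essentially the same route as the paper: set $\mu(x,y)=\delta(\tau(|x|)+\tau(|y|))$, iterate $\tau(|2|t)\le\tau(|2|)\tau(t)$, and use $\tau(|2|)<|2|\le 1$ to verify the three hypotheses of Theorem~\ref{theo1}, with $\Psi(x)=\mu(0,2x)$ giving the stated bound. The one place you go beyond the paper is the explicit treatment of $\tau(0)$ (the paper silently drops the $\tau(|0|)$ term from $\mu(0,2x)$); note, though, that since the stated codomain is $]0,+\infty]$ your forcing argument actually yields $\tau(0)=+\infty$ rather than $\tau(0)=0$, so the issue you flag is a genuine defect of the corollary's formulation rather than of your argument, which is otherwise sound.
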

\begin{proof}Defining $\mu$ : $ X\times X\rightarrow]0,+\infty]$ by $\mu(x,\ y) =\delta(\tau(|x|)+\tau(|y|))$. On the one hand, we have
$$
 \lim_{n\to\infty}|2|^{n}\mu(2^{n+1}x, 2^{n+1}y) \leq\lim_{n\to\infty}|2|^n\tau(|2|)^{n+1}\mu(x, y)=\lim_{n\to\infty}(|2|^{}\tau(|2|))^{n+1}\mu(x,\ y)
$$

$$=\lim_{n\to\infty}\tau(|2|)(|2|\tau(|2|))^{n}\mu(x, y)=0.$$
for all $x, y\in X$. On the other hand,
$$\Psi (x)=\lim_{n\rightarrow\infty}\max\{|2|^{k}\mu(0,2^{k+1}x);0\leq k<n\}=\mu(0,2x)=\delta(\tau(|2x|)),$$
and
$$\lim_{j\rightarrow\infty}\lim_{n\rightarrow\infty}\max\{|2|^{k}\mu(0,\ 2^{k+1}x);j\leq k<n+j\}= \lim_{j\to\infty}|2|^{j}\mu(0,\ 2^{j+1}x)=0.$$
Applying Theorem \ref{theo1}, we get desired result.
\end{proof}
 
\section{Conclusion}
We have proved the stability of  a reciprocal type functional equation
\begin{equation*}f(2x+y)+f\left(\frac{x+y}{2}\right)
=\frac{2f(x)f(y)}{f(x)+f(y)}+\frac{2f(x+y)f(y-x)}{3f(y-x)-f(x+y)}\end{equation*}
in non-Archimedean space using the direct method.


\end{document}